\theoremstyle{plain}
  \newtheorem{thm}{Theorem}
  \newtheorem{defn}{Definition}
  \newtheorem{prop}{Proposition}
\theoremstyle{definition}
  \newtheorem{example}{Example}
\theoremstyle{remark}
  \newtheorem{rem}{Remark}
\newcommand{\on}{\operatorname}
\newcommand{\g}{\mathfrak{g}}
\newcommand{\h}{\mathfrak{h}}
\newcommand{\la}{\langle}
\newcommand{\ra}{\rangle}
\newcommand{\R}{\mathbb{R}}
\tikzset{->-/.style={decoration={
  markings,
  mark=at position #1 with {\arrow{stealth'}}},postaction={decorate}}}
\tikzset{proj/.style={circle,fill=black!20,draw=black!20,inner sep=0pt}}
\tikzset{anch/.style={circle,fill=black,draw=black,inner sep=0pt,minimum size=1.5mm}}
\title[{Ricci flow, Courant algebroids, and Poisson--Lie T-duality}]{Ricci flow, Courant algebroids, and renormalization of Poisson--Lie T-duality}
\thanks{Supported in part by  the grant MODFLAT of the European Research Council and the NCCR SwissMAP of the Swiss National Science Foundation.}
\author{Pavol \v{S}evera}
\address{Section of Mathematics, University of Geneva, Switzerland}
\email{pavol.severa@gmail.com}
\author{Fridrich Valach}
\address{Section of Mathematics, University of Geneva, Switzerland}
\email{fridrich.valach@gmail.com}
\begin{document}
\maketitle
\begin{abstract}
We use a generalized Ricci tensor, defined for generalized metrics in Courant algebroids, to show that Poisson-Lie T-duality is compatible with the 1-loop renormalization group.
\end{abstract}
\section{Introduction}
 If $(M,g)$ is a Riemannian manifold, the 1-loop renormalization group flow of the standard 2-dimensional $\sigma$-model with the action functional
$$S(f)=\int_\Sigma g(\partial f,\bar\partial f)\qquad(f:\Sigma\to M)$$
is (up to an inessential coefficient), as found by Friedan \cite{f},   the Ricci flow
$$\frac{dg}{dt}=-2\on{Ric}_g+L_Xg,$$
where $X$ is an arbitrary vector field on $M$.

In a more general situation $\on{Ric}_g$ gets replaced by $\on{Ric}_{(g,H)}$ where $H$ is a closed 3-form and $\on{Ric}_{(g,H)}$ is the Ricci tensor of the $g$-preserving connection on $M$ with the torsion $T$ given by $g(T(X,Y),Z)=-H(X,Y,Z)$.
Namely, if $b\in\Omega^2(M)$ is a 2-form and $H_0\in\Omega^3(M)$ a closed 3-form,  the corresponding $\sigma$-model has the action functional 
$$S(f)=\int_\Sigma g(\partial f,\bar\partial f)+\int_\Sigma f^*b+\int_Yf^*H_0=\int_\Sigma e(\partial f,\bar\partial f)+\int_Yf^*H_0$$
where  $e=g+b\in\Gamma((T^*)^{\otimes 2}M)$ and $Y$ is an oriented 3-manifold bounded by $\Sigma$.  We set $H=db+H_0$ and the 1-loop renormalization group flow is now \cite{fv}
\begin{equation}\label{rflow}
\frac{de}{dt}=-2\on{Ric}_{(g,H)}+L_X e+i_XH_0-d\alpha
\end{equation}
where $X$ and $\alpha$ are an arbitrary vector field and a 1-form respectively.

It is natural to interpret the generalized Ricci flow \eqref{rflow} in terms of Courant algebroids. The pair $(X,\alpha)$ can be seen as a section of an exact Courant algebroid $(T\oplus T^*)M$ and its contribution to the flow as the Courant bracket $[(X,\alpha),\cdot]$. The tensor field $e$ can be replaced by its graph which is a subbundle of $(T\oplus T^*)M$ and \eqref{rflow} can be seen as an evolution of this subbundle. This point of view  was suggested by Streets in \cite{str}.

The main problem in this approach is to find a version of the flow \eqref{rflow} for arbitrary Courant algebroids, not just for $(T\oplus T^*)M$ (i.e.\ exact ones). The motivation for this problem comes mainly from T-duality. T-duality \cite{b}, and its extension Poisson-Lie T-duality \cite{ks}, is an equivalence of $\sigma$-models with different target spaces $M$. It has a natural formulation in terms of Courant algebroids (see \cite{cg} for the ordinary and \cite{s-ca} for the Poisson-Lie case). A~suitable definition of a generalized Ricci flow for an arbitrary Courant algebroid would immediately imply the compatibility of T-duality with 1-loop renormalization group flow. In this paper we give such a definition and prove that Poisson-Lie T-duality is indeed compatible with the 1-loop renormalization group flow. This result was known only in the case of no spectators \cite{vks} and the new proof is much simpler.

We should mention that this is not the first attempt to define the Ricci tensor of a generalized metric. A previous, and more conceptual way, was to use Gualtieri's definition of a generalized connection and its generalized curvature \cite{gbranes}, choose the connection so that it preserves the generalized metric and a suitable part of its generalized torsion vanishes, and take a suitable trace of the curvature. This approach was used by Garcia-Fernandez \cite{gf} to get a new interpretation  of the equations of motion of heterotic supergravity. Our approach is more utilitarian.

\subsection*{Acknowledgments}
We would like to thank to Marco Gualtieri for comments on a preliminary version of this paper, in particular for telling us about generalized connections and their curvature \cite{gbranes} and about the previous work on generalized Ricci tensor \cite{gf}.

\section{Courant algebroids}
In this section we shall summarize basic definitions and results concerning Courant algebroids.
Courant algebroids were introduced by Liu, Weinstein and Xu in \cite{lwx}.
\begin{defn}
A \emph{Courant algebroid (CA)} is a vector bundle $E\to M$ equipped with a non-degenerate symmetric bilinear form $\la\,,\,\ra$, with a vector bundle map 
$$\rho:E\to TM$$ (the \emph{anchor map}) and with a $\R$-bilinear map (\emph{Courant bracket})
$$[\,,\,]:\Gamma(E)\times\Gamma(E)\to\Gamma(E)$$
satisfying
\begin{itemize}
\item $[s,[t,u]]=[[s,t],u]+[t,[s,u]]$ for any $s,t,u\in\Gamma(E)$ \label{ax1}
\item $\rho([s,t])=[\rho(s),\rho(t)]$ for any $s,t\in\Gamma(E)$ \label{ax2}
\item $[s,ft]=f[s,t]+(\rho(s)f)t$ for any $s,t\in\Gamma(E)$, $f\in C^\infty(M)$ \label{ax3}
\item $\rho(s)\la t,u\ra=\la [s,t],u\ra+\la t,[s,u]\ra$ \label{ax4}
\item $[s,t]+[t,s]=\rho^t\bigl(d\la s,t\ra\bigr)$, where \label{ax5}
$$\rho^t:T^*M\to E^*\xrightarrow{\la,\ra} E$$
is the transpose of $\rho$.
\end{itemize}
\end{defn}
One can reformulate the first four properties as follows: if $s\in\Gamma(E)$ then the map $\Gamma(E)\to\Gamma(E)$, $x\mapsto[s,x]$, is a derivation of the Courant algebroid $E$, i.e.\ it is given by a vector field $Z_s$ on $E$  whose flow is an automorphism of the CA $E$,  such that $\rho_*Z_s=\rho(s)$. We shall call the map $x\mapsto[s,x]$ an \emph{inner derivation} of $E$.
 
\begin{example}
If $M$ is a point then $E$ is a Lie algebra with invariant non-degenerate quadratic form $\la\,,\,\ra$.
\end{example}

\begin{example}
A Courant algebroid $E\to M$ is called \emph{exact} if 
\begin{equation}\label{exact-seq}
0\to T^*M\xrightarrow{\rho^t} E\xrightarrow{\rho} TM\to 0
\end{equation}
 is an exact sequence (it is a chain complex for any Courant algebroid). 
 Exact CAs over $M$ are classified by $H^3(M,\R)$. Namely, if we choose a splitting of \eqref{exact-seq} by a $\la,\ra$-isotropic subbundle of $E$, we get $E\cong(T\oplus T^*)M$ with
\begin{subequations}\label{exact-CA}
\begin{align}
\la(X,\alpha),(Y,\beta)\ra&=\alpha(Y)+\beta(X),\\
\rho(X,\alpha)&=X,\\
[(X,\alpha),(Y,\beta)]&=([X,Y],L_X\beta-i_Y d\alpha+H(X,Y,\cdot))\label{exact-CA-br}
\end{align}
\end{subequations}
for some closed 3-form $H\in\Omega^3(M)$, and vice-versa, any closed $H$ makes $(T\oplus T^*)M$ in this way to an exact CA. A different choice of the splitting replaces $H$ by $H+dB$ for an appropriate $B\in\Omega^2(M)$.
\end{example}

It is convenient to write the Courant bracket $[,]$ on $E$ in terms of a connection on $E$:
\begin{prop}\label{prop:c}
Let $E\to M$ be a CA and let $\nabla$ be a connection on the vector bundle $E$ preserving the pairing $\la,\ra$. Then there is a section $c_\nabla\in\Gamma(\bigwedge^3 E)$ such that for every $u,v\in\Gamma(E)$ we have
\begin{equation}\label{c-nabla}
[u,v]=c_\nabla(u,v,\cdot)+\nabla_{\rho(u)}v-\nabla_{\rho(v)}u+\rho^t\la\nabla u,v\ra
\end{equation}
where we identify $E^*$ with $E$ via the pairing $\la,\ra$.
\end{prop}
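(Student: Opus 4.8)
The plan is to \emph{define} $c_\nabla$ by rearranging \eqref{c-nabla}: set
\[
B(u,v):=[u,v]-\nabla_{\rho(u)}v+\nabla_{\rho(v)}u-\rho^t\la\nabla u,v\ra\in\Gamma(E),
\]
where $\la\nabla u,v\ra\in\Omega^1(M)$ denotes the $1$-form $X\mapsto\la\nabla_X u,v\ra$, and then to show that $B$ is $C^\infty(M)$-bilinear and that $\la B(u,v),w\ra$ is totally antisymmetric in $u,v,w$. Granting this, tensoriality makes $\la B(\cdot,\cdot),\cdot\ra$ a section of $(E^*)^{\otimes 3}$ and total antisymmetry puts it in $\Gamma(\bigwedge^3 E^*)\cong\Gamma(\bigwedge^3 E)$, so $B(u,v)=c_\nabla(u,v,\cdot)$ for a well-defined $c_\nabla\in\Gamma(\bigwedge^3 E)$ (identifying $E^*$ with $E$ via $\la,\ra$), and \eqref{c-nabla} then holds tautologically.

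First I would record antisymmetry in the first two arguments: adding $B(u,v)$ and $B(v,u)$, the connection terms $\nabla_{\rho(u)}v-\nabla_{\rho(v)}u$ cancel against their counterparts, the last CA axiom gives $[u,v]+[v,u]=\rho^t\,d\la u,v\ra$, and $\la,\ra$-compatibility of $\nabla$ gives $d\la u,v\ra=\la\nabla u,v\ra+\la\nabla v,u\ra$, so $B(u,v)+B(v,u)=\rho^t\,d\la u,v\ra-\rho^t\,d\la u,v\ra=0$. Because of this, tensoriality only needs to be checked in the second slot: replacing $v$ by $fv$ and using $[u,ft]=f[u,t]+(\rho(u)f)t$, the Leibniz rule for $\nabla$, and $C^\infty(M)$-linearity of $\rho$ and $\rho^t$, the two terms involving $\rho(u)f$ (one from the bracket, one from $\nabla_{\rho(u)}(fv)$) cancel and $B(u,fv)=fB(u,v)$; antisymmetry propagates this to the first slot. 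Hence $B$ is an antisymmetric bundle map $E\otimes E\to E$.

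The substantive point is antisymmetry after lowering the last index, i.e.\ $\la B(u,v),w\ra=-\la B(u,w),v\ra$. Using $\la\rho^t\alpha,w\ra=\alpha(\rho(w))$ one rewrites $\la\rho^t\la\nabla u,v\ra,w\ra=\la\nabla_{\rho(w)}u,v\ra$; then in the sum $\la B(u,v),w\ra+\la B(u,w),v\ra$ the four terms built from $\nabla_{\rho(v)}u$, $\nabla_{\rho(w)}u$ and the two $\rho^t$-contributions cancel in pairs, leaving $\la[u,v],w\ra+\la[u,w],v\ra-\la\nabla_{\rho(u)}v,w\ra-\la\nabla_{\rho(u)}w,v\ra$, which is $\rho(u)\la v,w\ra-\rho(u)\la v,w\ra=0$ by the axiom $\rho(s)\la t,u\ra=\la[s,t],u\ra+\la t,[s,u]\ra$ together with $\nabla\la,\ra=0$. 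Combined with the first-two-slots antisymmetry, $\la B(u,v),w\ra$ is totally antisymmetric in $u,v,w$, which finishes the argument.

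I expect no serious obstacle: every step is a short algebraic manipulation. The only care needed is bookkeeping of the identification $E\cong E^*$ and of the conventions for $\rho^t$ and for the $1$-form $\la\nabla u,v\ra$, since it is precisely these that make the single term $\rho^t\la\nabla u,v\ra$ simultaneously absorb the Leibniz defect (forcing tensoriality, together with the Leibniz term of the bracket) and the $\nabla_{\rho(v)}u$ contribution (forcing antisymmetry in the third slot).
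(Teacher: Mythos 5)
Your proof is correct and follows the same strategy as the paper's: define the defect $B(u,v)=[u,v]_\nabla$ by rearranging \eqref{c-nabla} and show that it is $C^\infty(M)$-bilinear and that $\la B(u,v),w\ra$ is totally antisymmetric. The only real difference is in the antisymmetry check, where the paper normalizes the sections so that $\nabla u_i$ vanish at a chosen point and reads off antisymmetry of $\la[u_1,u_2],u_3\ra$ there, whereas you verify the two generating antisymmetries by direct algebraic computation from the axioms and $\nabla\la,\ra=0$; both work, and you additionally spell out the tensoriality step that the paper leaves to the reader.
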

\begin{proof}
Let us set
$$[u,v]_\nabla:=[u,v]-\bigl(\nabla_{\rho(u)}v-\nabla_{\rho(v)}u+\rho^t\la\nabla u,v\ra\bigr).$$
One easily sees that $[u,v]_\nabla$ is $C^\infty(M)$-bilinear in $u$ and $v$, i.e.\ it is a vector bundle map $E\otimes E\to E$.

 It remains to check that
$$c_\nabla(u_1,u_2,u_3)=\la[u_1,u_2]_\nabla,u_3\ra$$
is antisymmetric in its 3 arguments $u_1,u_2,u_3\in\Gamma(E)$. To do it, we choose $u_i$'s such that $\nabla u_i$'s vanish at a point $P\in M$. In that case $\la[u_1,u_2]_\nabla,u_3\ra(P)=\la[u_1,u_2],u_3\ra(P)$ and $\la[u_1,u_2],u_3\ra(P)$ is antisymmetric as $d\la u_i,u_j\ra$'s vanish at $P$.
\end{proof}

\begin{rem}
In \cite{gbranes} the section $c_\nabla$ is called the torsion of $\nabla$ and is defined for generalized connections on $E$.
\end{rem}

\section{Generalized metric}
A \emph{generalized metric} on a Courant algebroid $E\to M$, as defined in \cite{g2}, is  a vector subbundle $V_+\subset E$ such that  $\la,\ra$ is positive definite on $V_+$ and negative definite on $V_-:=V_+^\perp$.

If $E\to M$ is an exact CA with a chosen splitting, i.e.\ if $E=(T\oplus T^*)M$ with the structure given by \eqref{exact-CA}, then a generalized metric $V_+\subset E$ is the graph of a bilinear form $e=g+b$ on $TM$ such that the symmetric part $g$ of $e$ is a Riemannian metric on $M$. The skew-symmetric part $b$ of $e$ depends on the splitting ($g$ does not), and for a given $V_+$ there is a unique splitting of $E$ such that\ $b=0$ (see \cite{g2}); the closed 3-form corresponding to this splitting will be denoted $H$. An exact CA with a generalized metric is thus equivalent to a pair $(g,H)$.

Let now $\nabla^\pm$ be connections on the vector bundles $V_\pm$ preserving $\la,\ra$, and let $\nabla=\nabla^+\oplus\nabla^-$ be the resulting connection on $E=V_+\oplus V_-$. We shall call such a connection $\nabla$ \emph{compatible} with the generalized metric $V_+\subset E$. In the case of an exact CA there is a canonical connection $\nabla^-$ for which the ``$+--$''-part  of $c_\nabla\in\Gamma(\bigwedge^3 E)$ vanishes. This result can be found in \cite{gbranes}, but we include a proof for completeness.

\begin{prop}\label{prop:nabla-can}
If $E\to M$ is an exact CA and $V_+\subset E$ a generalized metric, there is a unique $\la,\ra$-preserving connection $\nabla^-_\mathrm{can}$ on $V_-$ such that, for any $\la,\ra$-preserving $\nabla^+$ on $V_+$, we have 
\begin{equation}\label{nab-aux}
c_{\nabla^+\oplus\nabla^-_\mathrm{can}}(x_+,y_-,z_-)=0
\end{equation}
 for any $x_+\in\Gamma(V_+)$ and $y_-,z_-\in\Gamma(V_-)$. When we identify $V_-$ with $TM$ via the anchor $\rho$ then $\nabla^-_\mathrm{can}$ is the the $g$-preserving connection with the torsion $T$ given by $g(T(X,Y),Z)=-H(X,Y,Z)$.
\end{prop}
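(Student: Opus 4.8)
The plan is to first pin down which part of a compatible connection $\nabla=\nabla^+\oplus\nabla^-$ the tensor $c_\nabla(x_+,y_-,z_-)$ actually depends on. Pairing \eqref{c-nabla} with a third section and using $\la\rho^t\alpha,w\ra=\alpha(\rho(w))$, one gets for all $u,v,w\in\Gamma(E)$
$$c_\nabla(u,v,w)=\la[u,v],w\ra-\la\nabla_{\rho(u)}v,w\ra+\la\nabla_{\rho(v)}u,w\ra-\la\nabla_{\rho(w)}u,v\ra.$$
Now take $u=x_+\in\Gamma(V_+)$ and $v=y_-,\ w=z_-\in\Gamma(V_-)$. The last two terms vanish, since $\nabla_{\rho(y_-)}x_+$ and $\nabla_{\rho(z_-)}x_+$ lie in $\Gamma(V_+)$, which is $\la,\ra$-orthogonal to $V_-$; and $\nabla_{\rho(x_+)}y_-=\nabla^-_{\rho(x_+)}y_-$. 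Hence
$$c_{\nabla^+\oplus\nabla^-}(x_+,y_-,z_-)=\la[x_+,y_-],z_-\ra-\la\nabla^-_{\rho(x_+)}y_-,z_-\ra,$$
which is independent of $\nabla^+$. Thus \eqref{nab-aux} is a condition on $\nabla^-$ alone.

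Since $\la,\ra$ is non-degenerate on $V_-$, the last formula shows that \eqref{nab-aux} holds if and only if $\nabla^-_{\rho(x_+)}y_-=\on{pr}_{V_-}[x_+,y_-]$ for all $x_+\in\Gamma(V_+)$ and $y_-\in\Gamma(V_-)$, where $\on{pr}_{V_-}\colon E\to V_-$ is the projection along $V_+$. For an exact CA, $\ker\rho=\rho^t(T^*M)$ is isotropic while $V_\pm$ are definite, so $\rho$ restricts to isomorphisms $V_\pm\to TM$; in particular every vector field $X$ equals $\rho(x_+)$ for a unique $x_+\in\Gamma(V_+)$, and the condition above forces
$$\nabla^-_Xy_-:=\on{pr}_{V_-}[x_+,y_-].$$
This already gives uniqueness. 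For existence one checks that this formula does define a $\la,\ra$-preserving connection: tensoriality in $X$ comes from $[fx_+,y_-]=f[x_+,y_-]-(\rho(y_-)f)\,x_+$ (valid since $\la x_+,y_-\ra=0$) after applying $\on{pr}_{V_-}$, which kills the $x_+$ term; the Leibniz rule in $y_-$ follows by projecting $[x_+,fy_-]=f[x_+,y_-]+(\rho(x_+)f)y_-$; and metric compatibility follows by projecting $\rho(x_+)\la y_-,z_-\ra=\la[x_+,y_-],z_-\ra+\la y_-,[x_+,z_-]\ra$ onto $V_-$.

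It remains to identify $\nabla^-_\mathrm{can}$ under $\rho|_{V_-}\colon V_-\to TM$; call the transported connection $D$. Work in the splitting with $b=0$, so that $V_\pm$ is the graph of $\pm g$ and the Courant bracket is \eqref{exact-CA-br} for the closed 3-form $H$. With $x_+=(X,g(X,\cdot))$, $y_-=(Y,-g(Y,\cdot))$, $z_-=(Z,-g(Z,\cdot))$ one has $\la y_-,z_-\ra=-2g(Y,Z)$, so by the characterization above
$$-2g(D_XY,Z)=\la\on{pr}_{V_-}[x_+,y_-],z_-\ra=\la[x_+,y_-],z_-\ra.$$
Expanding the right-hand side with \eqref{exact-CA-br} and comparing with the Koszul formula for the Levi-Civita connection $\nabla^g$ gives $\la[x_+,y_-],z_-\ra=-2g(\nabla^g_XY,Z)+H(X,Y,Z)$, hence $g(D_XY,Z)=g(\nabla^g_XY,Z)-\half H(X,Y,Z)$. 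Therefore $D$ preserves $g$, and since $\nabla^g$ is torsion-free its torsion $T$ satisfies $g(T(X,Y),Z)=-H(X,Y,Z)$, as claimed.

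The conceptual steps — that \eqref{nab-aux} sees only $\nabla^-$ and that invertibility of $\rho|_{V_+}$ determines it — are short; the only real work is the bookkeeping with musical isomorphisms and projections in the final computation, which is routine.
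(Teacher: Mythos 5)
Your proof is correct and follows essentially the same route as the paper: both reduce \eqref{nab-aux} via orthogonality of $V_\pm$ to the identity $c_\nabla(x_+,y_-,z_-)=\la[x_+,y_-],z_-\ra-\la\nabla^-_{\rho(x_+)}y_-,z_-\ra$, use bijectivity of $\rho|_{V_+}$ to pin down $\nabla^-_{\mathrm{can}}$, and compute the torsion in the $b=0$ splitting. The only cosmetic differences are that you exhibit the connection explicitly as $\nabla^-_Xy_-=\operatorname{pr}_{V_-}[x_+,y_-]$ and verify the axioms directly (the paper corrects an arbitrary compatible connection by a term $a_-$), and you identify the torsion via the Koszul formula rather than by antisymmetrizing over commuting vector fields.
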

\begin{proof}
Let $\nabla=\nabla^+\oplus\nabla^-$ be a compatible connection on $E$.
The relation \eqref{c-nabla} gives
\begin{equation}\label{c-vs-nabla}
\la[x_+,y_-],z_-\ra=c_\nabla(x_+,y_-,z_-)+\la\nabla_{\rho(x_+)}y_-,z_-\ra.
\end{equation}
As a result, if we change $\nabla$ by a 1-form 
$${a}={a}_++{a}_-,\ {a}_\pm\in\Omega^1(M,\textstyle\bigwedge^2V_\pm),$$
 we get
\begin{equation}\label{delta-c}
(c_{\nabla+{a}}-c_\nabla)(x_+,y_-,z_-)=-{a}_-(\rho(x_+))(y_-,z_-).
\end{equation}
For a given $\nabla=\nabla^+\oplus\nabla^-$ we then define ${a}_-\in\Omega^1(M,\bigwedge^2V_-)$ via
$${a}_-(\rho(x_+))(y_-,z_-)=c_\nabla(x_+,y_-,z_-)$$
(we use the fact that $\rho:V_+\to TM$ is bijective) and see that the connection $\nabla^-_\mathrm{can}=\nabla^-+{a}_-$ on $V_-$ is the unique solution of our problem.

Let us now compute the torsion of $\nabla^-_\mathrm{can}$ using \eqref{nab-aux}, i.e.\ using (cf. \eqref{c-vs-nabla})
\begin{equation}\label{xyz-proof}
\la[x_+,y_-],z_-\ra=\la(\nabla^-_{\mathrm{can}})_{\rho(x_+)}y_-,z_-\ra.
\end{equation}
We split $E$ to $(T\oplus T^*)M$ via the splitting for which $V_+$ is the graph of the Riemann metric $g$, and thus $V_-$ is the graph of $-g$. Notice that for $v_-,w_-\in\Gamma(V_-)$ we have
\begin{equation}\label{lara-g}
\la v_-,w_-\ra=-2g(\rho(v),\rho(w)).
\end{equation}

Let $X$ and $Y$ be vector fields on $M$ such that $[X,Y]=0$, and let
$$
x_\pm=(X,\pm g(X,\cdot)),\ y_\pm=(Y,\pm g(Y,\cdot))\in\Gamma(V_\pm)
$$
be their lifts to $V_\pm$. Let $z_-=(Z,-g(Z,\cdot))$ be a section of $V_-$. Then \eqref{exact-CA-br} gives us
$$[x_+,y_-]-[y_+,x_-]=(0,2H(X,Y,\cdot))$$
and \eqref{xyz-proof} and \eqref{lara-g} give us
$$-2g(T(X,Y),Z)=\la[x_+,y_-]-[y_+,x_-],z_-\ra=2H(X,Y,Z)$$
as we wanted to show.
\end{proof}

Let us now introduce the following graphical notation. We will systematically identify $E^*$ with $E$ via the pairing $\la,\ra$. The section $c_\nabla\in\Gamma(\bigwedge^3E)$ will be represented by a trivalent vertex
$$c_\nabla=
\begin{tikzpicture}[baseline=-0.6ex, scale=0.5,thick]
\draw(0,0)--(-1,0) (0,0)--(0.5,0.866) (0,0)--(0.5,-0.866);
\end{tikzpicture}
$$
with the counter-clockwise orientation, i.e.\ for any $u,v,w\in\Gamma(E)$
$$c_\nabla(u,v,w)=
\begin{tikzpicture}[baseline=-0.6ex, scale=0.6,thick]
\node (u) at (-1,0) {$u$};
\node (v) at (0.5,-0.866) {$v$};
\node (w) at (0.5,0.866) {$w$};
\draw(u)--(0,0) (v)--(0,0) (w)--(0,0);
\end{tikzpicture}
$$
If $V_+\subset E$ is a generalized metric,
let the orthogonal projections $E\to V_\pm$ be denoted by \tikz[baseline=-0.6ex]\draw[thick](-0.5,0)--(0,0) node[proj]{\tiny$+$}--(0.5,0); and \tikz[baseline=-0.6ex]\draw[thick](-0.5,0)--(0,0) node[proj]{\tiny$-$}--(0.5,0); respectively. In particular, if $E$ is exact and if  $\nabla^-=\nabla^-_\mathrm{can}$ then
$$
\begin{tikzpicture}[baseline=-0.6ex, scale=0.6,thick, scale=0.6]
\node (1) at (-1,0)[proj] {\tiny$+$};
\node (2) at (0.5,-0.866)[proj] {\tiny$-$};
\node (3) at (0.5,0.866)[proj] {\tiny$-$};
\draw(0,0)--(1)--(-2,0) (0,0)--(2)--(1,-1.73) (0,0)--(3)--(1,1.73);
\end{tikzpicture}
=0.
$$
Finally, let us also use the notation
$$\rho=
\begin{tikzpicture}[baseline=-0.6ex,thick, scale=0.6]
\node (rho) at (0,0) [anch]  {};
\draw (-1,0)--(rho);
\draw[dotted,->-=0.6] (rho)--(1.5,0);
\end{tikzpicture}
\qquad
\text{and}
\qquad
\nabla c=
\begin{tikzpicture}[baseline=-0.6ex, thick, scale=0.5,thick]
\draw(0,0)--(1,0) (0,0)--(-0.5,0.866) (0,0)--(-0.5,-0.866);
\draw[dotted,->,>=open triangle 60] (-1.5,0)--(0,0);
\end{tikzpicture}
$$
where dotted lines signify vector fields on $M$; more generally, \tikz{\draw[thick,dotted,->,>=open triangle 60] (-1,0)--(0,0);} will stand for $\nabla$ (always applied to some section of $E^{\otimes n}$).

\section{Generalized Ricci tensor}

Let $V_+\subset E$ be a generalized metric. An infinitesimal deformation of $V_+$, i.e.\ a tangent vector to $V_+$ in the space of generalized metrics in $E$, is given by a linear map $S:V_+\to E/V_+\cong V_-$, or equivalently by the corresponding bilinear form 
$$C:V_+\otimes V_-\to\R,\quad C(u_+,v_-)=\la S u_+, v_-\ra.$$ 

Deformations by inner derivations $[s,\cdot]$ of $E$ have the bilinear form 
$$C(u_+,v_-)=\la [s,u_+], v_-\ra.$$
 These deformations are trivial in the sense that they don't change the isomorphism class of the pair $V_+\subset E$.

We can now define the main object of this paper.
\begin{defn}
The \emph{generalized Ricci tensor} of a generalized metric $V_+$ in a Courant algebroid $E$ with a compatible connection $\nabla=\nabla^+\oplus\nabla^-$ is the bilinear form $\on{GRic}_{V_+}^{(\nabla)}:V_+\otimes V_-\to\R$ given by, for $u_+\in\Gamma(V_+)$ and $v_-\in\Gamma(V_-)$,
\begin{multline}\label{GRic}
\on{GRic}_{V_+}^{(\nabla)}(u_+,v_-):=
\on{Tr}_{V_-}\Bigl(x_-\mapsto R_{\nabla^-}\bigl(\rho(x_-),\rho(u_+)\bigr)v_-\Bigr)\\
-
\begin{tikzpicture}[scale=0.6, baseline=-0.6ex, thick]
\node (u) at (-0.5,0) {$\strut u_+$};
\coordinate (x) at (1,0);
\coordinate (y) at (3,0) {};
\node (v) at (4.5,0) {$\strut v_-$};
\node (plus) at (2,0.8) [proj] {\tiny $+$};
\node (minus) at (2,-0.8) [proj] {\tiny$-$};
\draw (u)--(x) (y)--(v);
\draw (x) to[bend left] (plus) (plus)to[bend left] (y)
      (x) to[bend right] (minus) (minus)to[bend right] (y) ;
\end{tikzpicture}
+
\begin{tikzpicture}[scale=0.6, baseline=-0.6ex, thick]
\node (u) at (-1,-1) {$u_+$};
\node (v) at (1,-1) {$v_-$};
\coordinate (x) at (0,0);
\node (minus) at (-0.2,1) [proj] {\tiny$-$};
\draw (u)--(x)--(v) (x)to[out=80, in=-60](minus);
\node (rho) at (-1,0.7) [anch]  {};
\draw (minus) to[out=140,in=90] (rho);
\draw[dotted,->,>=open triangle 60] (rho) to[out=-70,in=160] (x);
\end{tikzpicture}
\end{multline}
where $R_{\nabla^-}$ is the curvature of $\nabla^-$.
\end{defn}

\begin{rem}
In this definition one can replace $\nabla$ with a generalized connection in the sense of \cite{gbranes}. Moreover, the result is probably equal, up to inner derivations, to the generalized Ricci tensor of a torsion-free generalized connection  \cite{gf}. For our purposes ordinary connections are sufficient.
\end{rem}

The infinitesimal deformation of $V_+$ given by $\on{GRic}_{V_+}^{(\nabla)}$ is independent of $\nabla$ modulo inner derivations (and is actually fully independent of $\nabla^+$):

\begin{thm}\label{thm:GRic}
Let $V_+\subset E$ be a generalized metric in a Courant algebroid $E$ and let $\nabla=\nabla^+\oplus\nabla^-$ be a compatible connection. If $\nabla+{a}$, ${a}={a}_++{a}_-$, $a_\pm\in\Omega^1(M,\bigwedge^2 V_\pm)$, is another compatible connection then
$$\on{GRic}_{V_+}^{(\nabla+{a})}(u_+,v_-)-\on{GRic}_{V_+}^{(\nabla)}(u_+,v_-)=\la [s_-,u_+],v_-\ra$$
where $s_-\in\Gamma(V_-)$ is 
\begin{equation}\label{s-minus}
s_-=
\begin{tikzpicture}[baseline=-0.6ex, thick, scale=0.8]
\coordinate (x) at (0,-1);
\coordinate[label=-10:${a}_-$] (alpha) at (0,0);
\node[anch] (rho) at (0,1) {};
\draw (x)--(alpha)..controls +(1,0.5) and +(0.5,0).. (rho);
\draw[dotted, ->-=0.75] (rho) ..controls +(-0.5,0) and +(-1,0.5).. (alpha);
\end{tikzpicture}
\end{equation}
Here we use the graphical notation
$${a}_\pm(X)(x,y)=
\begin{tikzpicture}[scale=1,baseline=-0.6ex, thick]
\node (X) at (-1.2,0) {$X$};
\node (s) at (0.5,-0.866) {$x$};
\node (t) at (0.5,0.866) {$y$};
\coordinate (alpha) at (0,0);
\draw (alpha) node[right] {$\strut{a}_\pm$};
\draw[dotted, ->-=0.6] (X)--(alpha);
\draw (s)--(alpha) (t)--(alpha);
\end{tikzpicture}
$$
for a vector field $X$ and sections $x,y\in\Gamma(E)$.
\end{thm}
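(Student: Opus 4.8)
The plan is to compute the change of each of the three terms in \eqref{GRic} under $\nabla\rightsquigarrow\nabla+a$ and to show the total is the deformation by the inner derivation $[s_-,\cdot\,]$. Three elementary inputs are needed. First, repeating the computation in the proof of Proposition~\ref{prop:c} with $\nabla+a$ in place of $\nabla$ gives, for all $u,v,w\in\Gamma(E)$, the purely algebraic change
$$c_{\nabla+a}(u,v,w)-c_\nabla(u,v,w)=-\la a(\rho(u))v,w\ra+\la a(\rho(v))u,w\ra-\la a(\rho(w))u,v\ra$$
(generalizing \eqref{delta-c}), where $a(X)\in\bigwedge^2V_+\oplus\bigwedge^2V_-$ is viewed as a skew endomorphism of $E$ --- so $a_+(X)$ maps $V_+$ skew-symmetrically to itself and annihilates $V_-$, and symmetrically for $a_-(X)$ --- acting on $c_\nabla\in\Gamma(\bigwedge^3E)$ as a derivation. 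Since $\nabla+a$ and $\nabla$ induce on $\bigwedge^3E$ connections differing by this derivation action, the change of the self-contracted third term of \eqref{GRic} has a piece with $\nabla$ applied to $c_{\nabla+a}-c_\nabla$, a piece $a\cdot c_\nabla$ with $a$ acting as a derivation, and a piece quadratic in $a$. Second, $R_{\nabla^-+a_-}=R_{\nabla^-}+d^{\nabla^-}a_-+\tfrac12[a_-\wedge a_-]$, used only after evaluating on the anchored vectors $\rho(x_-),\rho(u_+)$ and tracing over $V_-$. Third, on the right-hand side, \eqref{c-nabla} together with $\nabla=\nabla^+\oplus\nabla^-$ and $V_+\perp V_-$ (which kills both $\nabla_{\rho(s_-)}u_+$, landing in $\Gamma(V_+)$, and the $1$-form $\la\nabla s_-,u_+\ra$) collapses to
$$\la[s_-,u_+],v_-\ra=c_\nabla(s_-,u_+,v_-)-\la\nabla^-_{\rho(u_+)}s_-,v_-\ra,$$
where, reading \eqref{s-minus}, $s_-=\on{tr}_\rho a_-\in\Gamma(V_-)$ is the contraction of $a_-\in\Omega^1(M,\bigwedge^2V_-)$ obtained by feeding the anchor of one of its two $V_-$-legs into the $1$-form slot; in particular $s_-$ is linear in $a_-$ alone and independent of $\nabla$.

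Now substitute. Everything is polynomial of degree $\le2$ in $a$, and since the target is linear in $a$ the quadratic part of the total variation must cancel by itself: it is collected from $\tfrac12[a_-\wedge a_-]$ (first term of \eqref{GRic}), from the product of two copies of $c_{\nabla+a}-c_\nabla$ (second term) and from the derivation action of $a$ on $c_{\nabla+a}-c_\nabla$ (third term), and cancels by a direct finite-dimensional computation with the skew endomorphisms $a_\pm(X)$, using $P_+P_-=0$ for the orthogonal projections $P_\pm:E\to V_\pm$, the total antisymmetry of $c_\nabla$, and the elementary frame identities $\sum_a P_\pm e_a\otimes e^a=\sum_a e_a\otimes P_\pm e^a$ (for a frame $\{e_a\}$ of $E$ with $\la,\ra$-dual frame $\{e^a\}$). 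The linear part must then reorganize into $c_\nabla(s_-,u_+,v_-)-\la\nabla^-_{\rho(u_+)}s_-,v_-\ra$. The ``algebraic'' linear contributions --- the cross term $c_\nabla\cdot(c_{\nabla+a}-c_\nabla)$ from the second term of \eqref{GRic} and the derivation piece $a\cdot c_\nabla$ from the third --- collapse, using that $a_+(X)$ annihilates $V_-$ and $a_-(X)$ annihilates $V_+$ and the antisymmetry of $c_\nabla$, to $c_\nabla(s_-,u_+,v_-)$. The contributions carrying a covariant derivative --- the $d^{\nabla^-}a_-$ part of the first term of \eqref{GRic} and the $\nabla(c_{\nabla+a}-c_\nabla)$ part of the self-contracted third term --- reproduce $-\la\nabla^-_{\rho(u_+)}s_-,v_-\ra$; here one uses that $\nabla$ preserves $\la,\ra$, so that differentiating the contraction $\on{tr}_\rho a_-$ produces, besides $\on{tr}_\rho(\nabla a_-)$, only a term in the covariant derivative of $\rho$ which cancels the analogous term obtained by differentiating the $\rho$-dependence of $c_{\nabla+a}-c_\nabla$, and the Courant-algebroid axiom that $\rho$ is a morphism of brackets, needed to rewrite the Lie bracket $[\rho(x_-),\rho(u_+)]$ that occurs in $d^{\nabla^-}a_-$ as $\rho[x_-,u_+]$ and then expand it by \eqref{c-nabla}.

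The main difficulty is the bookkeeping of the linear terms, and above all the consistent tracking of signs and orientations: the counter-clockwise convention for reading the trivalent vertex $c_\nabla$ off the diagrams in \eqref{GRic} (which already flips some naive signs), the sign in the identification $\bigwedge^2V_\pm\cong\mf{so}(V_\pm)$, and the negative-definiteness of $\la,\ra$ on $V_-$, so that the trace over $V_-$ and the contraction $\on{tr}_\rho$ involve the inverse of a negative definite Gram matrix. A convenient way to organize the argument, and a good consistency check, is to do first the special case $a_-=0$ (so $s_-=0$): then $c_{\nabla+a_+}-c_\nabla$ has vanishing component of type $(+,-,-)$, the first term of \eqref{GRic} is unchanged, and one verifies that the changes of the second and third terms cancel --- this already shows that $\on{GRic}_{V_+}^{(\nabla)}$ does not depend on $\nabla^+$ at all. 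Finally, it is worth noting that --- somewhat unexpectedly --- the Courant-algebroid Jacobi identity (equivalently, the Bianchi identity relating $d^\nabla c_\nabla$, the self-contraction of $c_\nabla\wedge c_\nabla$, and the $E$-curvature $R_\nabla$) plays no role: when $\nabla$ is varied in \eqref{GRic} no undifferentiated $c_\nabla\wedge c_\nabla$ or $R_\nabla$ survives, and only the axioms already used in Proposition~\ref{prop:c} together with the anchor being a morphism of brackets enter.
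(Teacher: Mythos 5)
Your proposal is correct and follows essentially the same route as the paper's appendix: vary each of the three terms of \eqref{GRic} using the change of $c_\nabla$ obtained from Proposition \ref{prop:c} and the change of $R_{\nabla^-}$, expand $\la [s_-,u_+],v_-\ra$ via \eqref{c-nabla} with $s_-$ the $\rho$-contraction of $a_-$, and match terms. The only organizational difference is that the paper computes the infinitesimal variation $\frac{d}{dt}\on{GRic}^{(\nabla+ta)}_{V_+}\big|_{t=0}$ and then integrates in $t$ (using that the resulting expression $\la[s_-,u_+],v_-\ra$ does not depend on the base connection), which renders the separate cancellation of your terms quadratic in $a$ automatic rather than something to verify by hand.
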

The proof of Theorem \ref{thm:GRic} is a straightforward calculation and can be found in Appendix \ref{appendix}.

Let us recall that a generalized metric in an exact CA is equivalent to a Riemannian metric $g$ and a closed 3-form $H$. Let $\on{Ric}_{(g,H)}$ be the Ricci tensor of the $g$-preserving connection with the torsion given by $g(T(X,Y),Z)=-H(X,Y,Z)$ (cf.\ Proposition \ref{prop:nabla-can}).
\begin{thm}\label{thm:GRic-ex}
If $V_+\subset E$ is a generalized metric in an exact CA $E$ and $\nabla=\nabla^+\oplus\nabla^-$ a compatible connection then
\begin{equation}\label{GRic-exact}
\on{GRic}_{V_+}^{(\nabla)}(u_+,v_-)=\on{Ric}_{(g,H)}(\rho(u_+),\rho(v_-))+\la [s_-,u_+],v_-\ra
\end{equation}
where $s_-\in\Gamma(V_-)$ is given by \eqref{s-minus} with ${a}_-=\nabla^--\nabla^-_\mathrm{can}$. 
\end{thm}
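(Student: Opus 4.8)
The plan is to reduce to the case $\nabla^-=\nabla^-_{\mathrm{can}}$, where the two graphical terms in the definition \eqref{GRic} of $\on{GRic}$ disappear and only the curvature trace survives, and then to recognise that trace as $\on{Ric}_{(g,H)}$. For the reduction I would apply Theorem \ref{thm:GRic} with reference connection $\nabla^+\oplus\nabla^-_{\mathrm{can}}$ and perturbation $a=a_-=\nabla^--\nabla^-_{\mathrm{can}}\in\Omega^1(M,\bigwedge^2V_-)$, $a_+=0$, obtaining
$$\on{GRic}_{V_+}^{(\nabla)}(u_+,v_-)=\on{GRic}_{V_+}^{(\nabla^+\oplus\nabla^-_{\mathrm{can}})}(u_+,v_-)+\la[s_-,u_+],v_-\ra,$$
with $s_-$ exactly the section \eqref{s-minus} attached to this $a_-$; the same theorem with $a_-=0$ shows the first term on the right does not depend on the auxiliary $\nabla^+$. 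It thus remains to prove $\on{GRic}_{V_+}^{(\nabla^+\oplus\nabla^-_{\mathrm{can}})}(u_+,v_-)=\on{Ric}_{(g,H)}(\rho(u_+),\rho(v_-))$.

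The heart of the argument is to see that, for $\nabla=\nabla^+\oplus\nabla^-_{\mathrm{can}}$, the second and third terms on the right-hand side of \eqref{GRic} vanish. I would check this by reading off the $V_\pm$-types of the legs of $c_\nabla$ entering each graph. In the ``$cc$''-term one of the two vertices is $c_\nabla$ evaluated on one section of $V_+$ and two of $V_-$, i.e.\ on its $V_+\otimes\bigwedge^2V_-$ component. In the ``$\nabla c$''-term, after moving the orthogonal projection across the pairing (the projections are self-adjoint idempotents), $c_\nabla$ is again evaluated on $(V_+,V_-,V_-)$; and since a compatible connection $\nabla=\nabla^+\oplus\nabla^-$ preserves the splitting $E=V_+\oplus V_-$, it sends the $V_+\otimes\bigwedge^2V_-$ component of $c_\nabla$ into $T^*M\otimes V_+\otimes\bigwedge^2V_-$, so only $\nabla$ of that component is felt. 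By Proposition \ref{prop:nabla-can} the $V_+\otimes\bigwedge^2V_-$ component of $c_{\nabla^+\oplus\nabla^-_{\mathrm{can}}}$ is identically zero (for every $\nabla^+$), so both terms vanish and we are left with
$$\on{GRic}_{V_+}^{(\nabla^+\oplus\nabla^-_{\mathrm{can}})}(u_+,v_-)=\on{Tr}_{V_-}\bigl(x_-\mapsto R_{\nabla^-_{\mathrm{can}}}(\rho(x_-),\rho(u_+))v_-\bigr).$$

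Finally I would identify this trace. Transporting $\nabla^-_{\mathrm{can}}$ along the isomorphism $\rho:V_-\to TM$ turns it, by Proposition \ref{prop:nabla-can}, into the $g$-preserving connection on $TM$ with torsion $g(T(X,Y),Z)=-H(X,Y,Z)$; the curvature transports accordingly, so the trace equals $\on{Tr}\bigl(X\mapsto R(X,\rho(u_+))\rho(v_-)\bigr)$, which is by definition $\on{Ric}_{(g,H)}(\rho(u_+),\rho(v_-))$. (A trace does not depend on the metric, so the factor $-2$ in \eqref{lara-g} plays no role here.) Combined with the first paragraph, this is exactly \eqref{GRic-exact}. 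The one step that needs genuine care is the middle one --- verifying that each graphical term of \eqref{GRic} really involves only the ``$+--$'' piece of $c_\nabla$, respectively $\nabla$ of it, including the remark that a compatible $\nabla$ carries this piece to one of the same type; once that is granted, all that is left is to match the conventions for the sign of the curvature and the order of the arguments of $\on{Ric}$, which are chosen to reproduce the statement.
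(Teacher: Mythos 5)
Your proposal is correct and follows essentially the same route as the paper: reduce to $\nabla^-=\nabla^-_{\mathrm{can}}$ via Theorem \ref{thm:GRic} (which produces exactly the $\la[s_-,u_+],v_-\ra$ correction with $a_-=\nabla^--\nabla^-_{\mathrm{can}}$), observe that the two graphical terms of \eqref{GRic} vanish because they only see the $V_+\otimes\bigwedge^2V_-$ component of $c_\nabla$ (respectively $\nabla$ of it, which stays of the same type), and identify the remaining curvature trace with $\on{Ric}_{(g,H)}$ through the isomorphism $\rho:V_-\to TM$ of Proposition \ref{prop:nabla-can}. Your middle step is in fact spelled out in slightly more detail than in the paper, which simply asserts the vanishing.
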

\begin{proof}
If $\nabla^-=\nabla^-_\mathrm{can}$ then 
$$
\begin{tikzpicture}[baseline=-0.6ex, thick, scale=0.4]
\node (1) at (-1,0)[proj] {\tiny$+$};
\node (2) at (0.5,-0.866)[proj] {\tiny$-$};
\node (3) at (0.5,0.866)[proj] {\tiny$-$};
\draw(0,0)--(1)--(-2,0) (0,0)--(2)--(1,-1.73) (0,0)--(3)--(1,1.73);
\end{tikzpicture}
=0
$$
and so the second and third term of \eqref{GRic} vanish and we have 
$$\on{GRic}_{V_+}^{(\nabla)}(u_+,v_-)=\on{Tr}_{V_-}\Bigl(x_-\mapsto R_{\nabla^-_\text{can}}\bigl(\rho(x_-),\rho(u_+)\bigr)v_-\Bigr)=\on{Ric}_{(g,H)}(\rho(u_+),\rho(v_-)).$$
For any compatible $\nabla$ we therefore have
$$\on{GRic}_{V_+}^{(\nabla-{a}_-)}(u_+,v_-)=\on{Ric}_{(g,H)}(\rho(u_+),\rho(v_-))$$
where ${a}_-=\nabla^--\nabla^-_\mathrm{can}$. Equation \eqref{GRic-exact} now follows from Theorem \ref{thm:GRic}.
\end{proof}

Theorem \ref{thm:GRic-ex} has the following meaning. If $E=(T\oplus T^*)M$ is exact given by a closed 3-form $H_0$ and $V_+\subset E$ is the graph of $e=g+b$, then an infinitesimal deformation of $V_+$ can be given either by a bilinear form $C:V_+\otimes V_-\to\R$, or by a deformation $\frac{de}{dt}$ of $e$; they are linked via 
$$\frac{de}{dt}(\rho(u_+),\rho(v_-))=C(u_+,v_-).$$
 The deformation of $V_+$ via $-2 \on{GRic}_{V_+}^{(\nabla)}$ is thus precisely the generalized Ricci flow \eqref{rflow} with $(X,\alpha)=-2s_-$ (notice that $\alpha=-e(\cdot, X)$ as $s_-\in\Gamma(V_-)$).

\begin{rem}
Theorems \ref{thm:GRic} and \ref{thm:GRic-ex} explain why $\on{GRic}_{V_+}^{(\nabla)}$ is useful, but its definition is somewhat ad hoc. A natural guess is that $\on{GRic}_{V_+}^{(\nabla)}$ comes from one-loop renormalization of the Courant $\sigma$-model given by $E$ with the boundary condition given by $V_+$, introduced in \cite{s-cs}.
\end{rem}

\begin{rem}
We defined $\on{GRic}^{(\nabla)}_{V_+}$ as an infinitesimal deformation of $V_+$. An obvious question is whether it defines, say in the case of a compact $M$, a well-defined flow for some finite time. In other words, whether one can find $t_0>0$, a family $V_+(t)$ parametrized by $t\in[0,t_0)$ such that $V_+(0)=V_+$, and a family of connections $\nabla(t)$ compatible with $V_+(t)$, and possibly a family of sections $z_-(t)\in\Gamma(V_-(t))$, such that 
$$\frac{dV_+(t)}{dt}=-2\on{GRic}^{(\nabla(t))}_{V_+(t)}+\la[z_-(t),\cdot],\cdot\ra$$
(where, as before, we identify tangent vectors to $V_+$ in the space of generalized metrics in $E$ with bilinear forms $V_+\otimes V_-\to\R$).
The section $z_-(t)\in\Gamma(V_-(t))$ is added because of the possibly arbitrary family of connections $\nabla(t)$ (cf.\ Theorem \ref{thm:GRic}). We leave this question open.
\end{rem}

\begin{rem}
Another question is, supposing the flow exists, how to describe its outcome in a way that would be independent of $\nabla(t)$ and $z_-(t)$, i.e.\ how to deal with the fact that $\on{GRic}^{(\nabla)}_{V_+}$ depends on the auxiliary connection $\nabla$, but only by inner derivations by sections of $V_-$. Here the answer is simple. The outcome should be a submersion $p:\hat M\to[0,t_0)$, a Courant algebroid $\hat E\to\hat M$ such that $dp\circ\rho$ is surjective at all points of $\hat M$, and a vector subbundle $\hat V_+\subset\hat E$ such that   $\la,\ra|_{\hat V_+}$ is positive definite and $dp\circ\rho|_{\hat V_+}=0$. 

If we set $M=p^{-1}(0)$, we can find a CA $E\to M$, a (at least local, and global if $p$ is proper) diffeomorphism $\hat M\cong M\times[0,t_0)$ compatible with $p$, and an isomorphism of CAs $\hat E\cong E\times(T\oplus T^*)[0,t_0)$ such that $\hat V_+$ becomes a family $V_+(t)$ of subbundles of $E$ (these isomorphisms can be obtained by choosing a section $x\in\Gamma(\hat V_+^\perp)$ such that $p_*(\rho(x))=\partial/\partial t$ and $\la x,x\ra=0$ and using the flow in $\hat E$ generated by the inner derivation $[x,\cdot]$). The resulting family $V_+(t)$ depends on the choices, and it changes by the flow of a time-dependent inner derivation generated by a section of $V_-(t)$ if we make a different choice.
\end{rem}

\section{Poisson-Lie T-duality is compatible with the 1-loop renormalization group flow}

\begin{thm}\label{thm:pltd}
Let $E\to M$ be a CA, $\phi:M'\to M$ a smooth map, and let $\phi^*E$ be endowed with a CA structure satisfying the condition
\begin{equation}\label{pullback-CA}
[\phi^*u,\phi^*v]=\phi^*[u,v]\text{ and }\phi_*\bigl(\rho(\phi^*u)\bigr)=\rho(u)\quad\forall u,v\in\Gamma(E).
\end{equation}
 Let $V_+\subset E$ be a generalized metric and $\nabla=\nabla^+\oplus\nabla^-$ a compatible connection on $E$. Then the generalized Ricci tensors of $V_+\subset E$ and of $\phi^*V_+\subset \phi^*E$ satisfy 
$$\on{GRic}^{(\phi^*\nabla)}_{\phi^*V_+}=\phi^*\on{GRic}_{V_+}^{(\nabla)}.$$
\end{thm}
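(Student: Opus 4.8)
The plan is to show that every ingredient appearing in the definition \eqref{GRic} of the generalized Ricci tensor pulls back correctly along $\phi$, so that the whole expression does. The key observation is that the condition \eqref{pullback-CA} is exactly what is needed to make the pullback connection $\phi^*\nabla$ ``see'' the pullback Courant bracket in the same way $\nabla$ sees $[\,,\,]$ on $E$. Concretely, I would first record that $\phi^*V_\pm\subset\phi^*E$ are again the positive/negative eigenbundles of a generalized metric on $\phi^*E$ (the pairing is pulled back pointwise, so definiteness is preserved), and that $\phi^*\nabla=\phi^*\nabla^+\oplus\phi^*\nabla^-$ is compatible with $\phi^*V_+$. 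Then I would check that the torsion section transforms as $c_{\phi^*\nabla}=\phi^*c_\nabla$: this follows from Proposition \ref{prop:c}, since applying $\phi^*$ to \eqref{c-nabla} and using \eqref{pullback-CA} together with the defining property $(\phi^*\nabla)_{\phi_*^{-1}\text{-related field}}\phi^*v=\phi^*(\nabla v)$ on pulled-back sections shows that $\phi^*c_\nabla$ satisfies the same characterizing identity on $\phi^*E$ that $c_{\phi^*\nabla}$ does.

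Next I would go term by term through \eqref{GRic}. The curvature term $\on{Tr}_{V_-}(x_-\mapsto R_{\nabla^-}(\rho(x_-),\rho(u_+))v_-)$ pulls back because $R_{\phi^*\nabla^-}(\rho(\phi^*x_-),\rho(\phi^*u_+))\phi^*v_-$ equals $\phi^*$ of the corresponding curvature endomorphism — here one uses that curvature of a pullback connection is the pullback of the curvature, that $\rho$ on $\phi^*E$ is $\phi$-related to $\rho$ on $E$ by the second part of \eqref{pullback-CA}, and that tracing over $V_-$ versus $\phi^*V_-$ gives the same function since the fibers are identified isometrically. The second and third diagrams in \eqref{GRic} are built entirely from $c_\nabla$, the projections onto $V_\pm$, the anchor $\rho$, the pairing, and $\nabla$ applied to sections; since each of these has now been shown to commute with $\phi^*$, so does any diagram assembled from them. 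Hence both remaining terms pull back, and adding the three contributions gives $\on{GRic}^{(\phi^*\nabla)}_{\phi^*V_+}(\phi^*u_+,\phi^*v_-)=\phi^*\bigl(\on{GRic}^{(\nabla)}_{V_+}(u_+,v_-)\bigr)$, which is the claim since sections of the form $\phi^*u_+$, $\phi^*v_-$ suffice to determine a bilinear form on $\phi^*V_+\otimes\phi^*V_-$ at points in the image, and by the local nature of the statement in general (or one argues pointwise throughout).

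I expect the main obstacle to be bookkeeping rather than a genuine difficulty: one must be careful that $\phi$ need not be a submersion or immersion, so $\phi^*E$ is defined only as a pullback bundle and sections of the form $\phi^*u$ do not span $\Gamma(\phi^*E)$ as a $C^\infty(M')$-module in general. Thus every identity I claim ``on pulled-back sections'' must either be $C^\infty(M')$-linear (so that it extends to all sections) or be verified directly on the relevant tensor combination — and indeed $c_{\phi^*\nabla}$ is a tensor, the projections and $\rho$ are tensorial, and $R_{\phi^*\nabla^-}$ is tensorial, so the only non-tensorial object is $\nabla$ itself, which appears in \eqref{GRic} applied to a fixed section combination that is itself a legitimate pulled-back object. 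The one point needing a short argument is that $\nabla$-compatibility and the characterization of $c_\nabla$ are enough: I would phrase this as showing that for any connection $\nabla$ on $E$ preserving $\la,\ra$, the pullback $\phi^*\nabla$ preserves $\la,\ra$ on $\phi^*E$ and satisfies $c_{\phi^*\nabla}=\phi^*c_\nabla$, and then note that the definition of $\on{GRic}$ is a universal algebraic expression in $(\la,\ra,\rho,[\,,\,],\nabla,V_+)$, all of whose inputs are $\phi$-compatible by hypothesis.
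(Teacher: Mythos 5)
Your proposal is correct and follows essentially the same route as the paper: establish $c_{\phi^*\nabla}=\phi^*c_\nabla$ by comparing the characterizing identity of Proposition~\ref{prop:c} on pulled-back sections (using \eqref{pullback-CA} and tensoriality of $c$), and then observe that every ingredient of \eqref{GRic} — curvature of a pullback connection, the $\phi$-related anchors, the projections, and the fiberwise traces — commutes with $\phi^*$. The paper compresses this into three sentences; your version merely makes the ``follows readily'' step and the tensoriality bookkeeping explicit.
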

\begin{proof}
If $u,v,w\in\Gamma(E)$ then $\la[u,v],w\ra=\la[\phi^*u,\phi^*v],\phi^*w\ra$. When we express both sides of this equality using Proposition \ref{prop:c}, we get $c_{\phi^*\nabla}=\phi^*c_\nabla$. From this and from the definition of $\on{GRic}$ the statement follows readily.
\end{proof}

\begin{rem}
If $E\to M$ is a CA and $\phi:M'\to M$ a smooth map, the CA structures on $\phi^*E$ satisfying the condition \eqref{pullback-CA} were characterized by Li-Bland and Meinrenken \cite{lbm} as follows: if $\rho:\phi^*E\to TM'$ is a vector bundle map then such a CA structure on $\phi^*E$ with the anchor $\rho$ exists (and moreover is unique) iff
\begin{itemize}
\item $\phi_*\bigl(\rho(\phi^*u)\bigr)=\rho(u)\quad\forall u\in\Gamma(E)$
\item $[\rho(\phi^*u),\rho(\phi^*v)]=\rho(\phi^*[u,v])\quad\forall u,v\in\Gamma(E)$ 
\item for any $p\in M'$ the kernel of $\rho$ at $p$ is a coisotropic subspace of $E_{\phi(p)}$.
\end{itemize}
In particular, if $M$ is a point and hence $E=\g$ is a Lie algebra, a CA structure on $\g\times M'$, such that the Courant bracket of constant sections is the Lie bracket in $\g$, is equivalent to an action $\rho$ of $\g$ on $M'$ with coisotropic stabilizers. This CA is exact iff the action is transitive with Lagrangian stabilizers, i.e.\ if $M'$ is (locally diffeomorphic to) $G/H$ where $\h\subset\g$ satisfies $\h^\perp=\h$.
\end{rem}

Theorem \ref{thm:pltd} implies that Poisson-Lie T-duality is compatible with the 1-loop renormalization group flow. Let us  summarize the needed definitions. Suppose that $E\to M$ is a  CA, $\phi_i:M_i\to M$, $i=1,2$, are surjective submersions, and that we have  \emph{exact} CA structures on $\phi_i^*E$ satisfying \eqref{pullback-CA}.\footnote{If $G$ is a connected Lie group and $\la,\ra$ an invariant inner product on $\g$, then any principal $G$-bundle $P\to M$ with vanishing 1st Pontryagin class $[\la F,F\ra]\in H^4(M,\R)$ gives a transitive CA $E\to M$ (depending on a choice of a $\omega\in\Omega^3(M)/d\Omega^2(M)$ such that $d\omega=\la F,F\ra$) and for any Lie subgroup $H\subset G$ with $\h^\perp=\h$ we have a compatible exact CA structure on $\phi^*E\to P/H$ where $\phi:P/H\to M$ is the projection. This is the main source of examples. (In the case of $M=\text{point}$ we have $E=\g$ and so $\phi^* E=\g\times G/H$.) See \cite{s-ca} for details.} If $V_+\subset E$ is a generalized metric then $\phi_i^*V_+\subset \phi_i^*E$ are generalized metrics in the exact CAs $\phi_i^*E$ and these generalized metrics on $M_1$ and $M_2$ are said to be Poisson-Lie T-dual to each other.\footnote{It implies that the 2-dimensional $\sigma$-models with the target spaces $M_1$ and $M_2$ are (after a suitable reduction) isomorphic as Hamiltonian systems. See \cite{s-cs} for details.} 

If we choose a compatible connection $\nabla$ on $E$ and deform $V_+$ by $-2\on{GRic}_{V_+}^{(\nabla)}$ then by Theorem \ref{thm:pltd} the subbundle $\phi_i^*V_+\subset\phi^*E$ gets deformed by $-2\on{GRic}^{(\phi_i^*\nabla)}_{\phi_i^*V_+}$, i.e.\ by the 1-loop renormalization group flow
\eqref{rflow}. The deformed $\phi_i^*V_+$'s stay Poisson-Lie T-dual to each other, and so Poisson-Lie T-duality is indeed compatible with the 1-loop renormalization group flow.

\begin{rem}
The compatibility of Poisson-Lie T-duality with the 1-loop renormalization group flow was shown in the special case of $M=\text{point}$ (i.e.\ in the case of no ``spectators'') in \cite{vks} (under certain mild additional conditions). In this case we have $E=\g$ for some Lie algebra $\g$ with an invariant $\la,\ra$ and  $\on{GRic}$ simplifies to
$$
\on{GRic}_{V_+}(u_+,v_-)=
-
\begin{tikzpicture}[scale=0.6, baseline=-0.6ex, thick]
\node (u) at (-0.5,0) {$\strut u_+$};
\coordinate (x) at (1,0);
\coordinate (y) at (3,0) {};
\node (v) at (4.5,0) {$\strut v_-$};
\node (plus) at (2,0.8) [proj] {\tiny $+$};
\node (minus) at (2,-0.8) [proj] {\tiny$-$};
\draw (u)--(x) (y)--(v);
\draw (x) to[bend left] (plus) (plus)to[bend left] (y)
      (x) to[bend right] (minus) (minus)to[bend right] (y) ;
\end{tikzpicture}
$$
This expression was discovered in \cite{sst} as the 1-loop renormalization of a duality-invariant Hamiltonian version of the corresponding $\sigma$-models from \cite{ks2}. Proving that this expression really gives the generalized Ricci flow \eqref{rflow} and extending it to Poisson-Lie T-duality with spectators (i.e.\ with non-trivial $M$) was the original motivation of this paper.

\end{rem}

\appendix

\section{Proof of Theorem \ref{thm:GRic}}\label{appendix}
Let us compute
$$\delta_{a}\on{GRic}_{V_+}^{(\nabla)}:=\frac{d}{dt}\on{GRic}_{V_+}^{(\nabla+t{a})}\Big|_{t=0}.$$
The three terms of $\on{GRic}_{V_+}^{(\nabla)}$ contribute
\begin{multline*}
\delta_{a}\on{Tr}_{V_-}\Bigl(x_-\mapsto R_{\nabla^-}\bigl(\rho(x_-),\rho(u_+)\bigr)v_-\Bigr)\\
=
\begin{tikzpicture}[thick,scale=1,baseline=-0.6ex]
\node (blob) [fill=black!20!white, rectangle, rounded corners,minimum height=0.5cm,minimum width=2cm, inner sep=0pt] at (0.1,0) {};
\node[anch] (rho) at (0.5,1) {};
\node (u) at (-1,-1) {$\strut u_+$};
\node (v) at (1,-1) {$\strut v_-$};
\node[anch] (rhoin) at (-0.5,0) {};
\coordinate [label=right:{$\strut{a}_-$}] (alpha)  at (0.5,0);
\draw (u) to(rhoin) (v)--(alpha) (alpha) to[out=60,in=0] (rho);
\draw[dotted,->,>=open triangle 60] (rho) to[out=180, in=90] (blob);
\draw[dotted, ->-=0.6] (rhoin)--(alpha);
\end{tikzpicture}
-
\begin{tikzpicture}[thick,scale=1,baseline=-0.6ex]
\node (blob) [fill=black!20!white, rectangle, rounded corners,minimum height=0.5cm,minimum width=2cm, inner sep=0pt] at (0.1,0) {};
\node[anch] (rho) at (-0.2,-1) {};
\node (u) at (-1,-1) {$\strut u_+$};
\node (v) at (1,-1) {$\strut v_-$};
\node[anch] (rhoin) at (-0.5,0) {};
\coordinate [label=right:{$\strut {a}_-$}] (alpha)  at (0.5,0);
\draw (u) to(rho) (v)--(alpha) (alpha) ..controls +(1,1) and +(-1,1)..(rhoin);
\draw[dotted,->,>=open triangle 60] (rho) to[out=0, in=-90] (blob);
\draw[dotted, ->-=0.6] (rhoin)--(alpha);
\end{tikzpicture}
-
\begin{tikzpicture}[thick,scale=1,baseline=-0.6ex]
\node[anch] (rho) at (-0.2,0) {};
\node (u) at (-1,-1) {$\strut u_+$};
\node (v) at (1,-1) {$\strut v_-$};
\coordinate (x) at (-0.7,0) {};
\coordinate [label=right:{$\strut{a}_-$}] (alpha)  at (0.5,0);
\draw (u) to(x) (v)--(alpha) (alpha) ..controls +(1,1) and +(-1,1)..(x)--(rho);
\draw[dotted,->-=0.6] (rho) -- (alpha);
\end{tikzpicture}
\end{multline*}
$$
-\delta_{a}
\begin{tikzpicture}[scale=0.6, baseline=-0.6ex, thick]
\node (u) at (-0.5,0) {$\strut u_+$};
\coordinate (x) at (1,0);
\coordinate (y) at (3,0) {};
\node (v) at (4.5,0) {$\strut v_-$};
\node (plus) at (2,0.8) [proj] {\tiny $+$};
\node (minus) at (2,-0.8) [proj] {\tiny$-$};
\draw (u)--(x) (y)--(v);
\draw (x) to[bend left] (plus) (plus)to[bend left] (y)
      (x) to[bend right] (minus) (minus)to[bend right] (y) ;
\end{tikzpicture}
=
\begin{tikzpicture}[scale=0.6, baseline=-0.6ex, thick]
\node (u) at (-0.5,0) {$\strut u_+$};
\coordinate(x) at (1,0);
\node (alpha) at (0.7,0.4) {${a}_+$};
\coordinate (y) at (3,0) {};
\node (v) at (4.5,0) {$\strut v_-$};
\node (rho) at (2,-0.8) [anch] {};
\draw (u)--(x) (y)--(v);
\draw (x) .. controls +(0.4,1) and +(-0.4,1).. (y)
      (y) to[bend left] node[proj]{\tiny$-$} (rho);
\draw [dotted,->-=0.6](rho)to[bend left] (x);
\end{tikzpicture}
+
\begin{tikzpicture}[scale=0.6, baseline=-0.6ex, thick]
\node (u) at (-0.5,0) {$\strut u_+$};
\coordinate(x) at (1,0);
\coordinate (y) at (3,0) {};
\node (alpha) at (3.4,-0.4) {${a}_-$};
\node (v) at (4.5,0) {$\strut v_-$};
\node (rho) at (2,0.8) [anch] {};
\draw (u)--(x) (y)--(v);
\draw (x) .. controls +(0.4,-1) and +(-0.4,-1).. (y)
      (x) to[bend left] node[proj]{\tiny$+$} (rho);
\draw [dotted,->-=0.6](rho)to[bend left] (y);
\end{tikzpicture}
$$

\begin{multline*}
\delta_{a} \begin{tikzpicture}[scale=0.6, baseline=-0.6ex, thick]
\node (u) at (-1,-1) {$u_+$};
\node (v) at (1,-1) {$v_-$};
\coordinate (x) at (0,0);
\node (minus) at (-0.2,1) [proj] {\tiny$-$};
\draw (u)--(x)--(v) (x)to[out=80, in=-60](minus);
\node (rho) at (-1,0.7) [anch]  {};
\draw (minus) to[out=140,in=90] (rho);
\draw[dotted,->,>=open triangle 60] (rho) to[out=-70,in=160] (x);
\end{tikzpicture}
=
-
\begin{tikzpicture}[thick,scale=1,baseline=-0.6ex]
\node (blob) [fill=black!20!white, rectangle, rounded corners,minimum height=0.5cm,minimum width=2cm, inner sep=0pt] at (0.1,0) {};
\node[anch] (rho) at (0.5,1) {};
\node (u) at (-1,-1) {$ u_+$};
\node (v) at (1,-1) {$ v_-$};
\node[anch] (rhoin) at (-0.5,0) {};
\coordinate [label=right:{$\strut{a}_-$}] (alpha)  at (0.5,0);
\draw (u) to(rhoin) (v)--(alpha) (alpha) to[out=60,in=0] (rho);
\draw[dotted,->,>=open triangle 60] (rho) to[out=180, in=90] (blob);
\draw[dotted, ->-=0.6] (rhoin)--(alpha);
\end{tikzpicture}
+
\begin{tikzpicture}[baseline=-0.6ex, thick, scale=1]
\node (u) at (-0.5,-1) {$ u_+$};
\node (v) at (0.5,-1) {$ v_-$};
\coordinate (x) at (0,-0.3);
\coordinate (alpha) at (0,0.2);
\draw (0.3,0) node {${a}_-$};
\node[anch] (rho) at (0,1) {};
\draw (u)--(x)--(v) (x)--(alpha)..controls +(1,0.5) and +(0.5,0).. (rho);
\draw[dotted, ->-=0.75] (rho) ..controls +(-0.5,0) and +(-1,0.5).. (alpha);
\end{tikzpicture}
\\
+
\begin{tikzpicture}[thick,scale=1,baseline=-0.6ex]
\node[anch] (rho) at (0.5,1) {};
\node (u) at (-1,-1) {$ u_+$};
\node (v) at (1,-1) {$ v_-$};
\coordinate (x) at (-0.5,0) {};
\coordinate  (alpha)  at (0.5,0);
\draw (0.3,-0.3) node {${a}_-$};
\draw (u)--(x)--(alpha)--(v);
\draw (x) ..controls +(-1,1) and +(-0.5,0.3)..node[proj] {\tiny$-$} (rho);
\draw[dotted, ->-=0.6] (rho)to[bend left=50](alpha);
\end{tikzpicture}
-
\begin{tikzpicture}[thick,scale=1,baseline=-0.6ex]
\node[anch] (rho) at (-0.5,1) {};
\node (u) at (-1,-1) {$ u_+$};
\node (v) at (1,-1) {$ v_-$};
\coordinate (x) at (0.5,0) {};
\coordinate (alpha)  at (-0.5,0);
\draw (-0.25,-0.3) node {${a}_+$};
\draw (u)--(alpha)--(x)--(v);
\draw (x) ..controls +(1,1) and +(0.5,0.3)..node[proj] {\tiny$-$} (rho);
\draw[dotted, ->-=0.6] (rho)to[bend right=50](alpha);
\end{tikzpicture}
\end{multline*}

Since
$$
\la[s_-,u_+],v_-\ra=-\la[u_+,s_-],v_-\ra=
\begin{tikzpicture}[baseline=-0.6ex, thick, scale=1]
\node (u) at (-0.5,-1) {$ u_+$};
\node (v) at (0.5,-1) {$ v_-$};
\coordinate (x) at (0,-0.3);
\coordinate (alpha) at (0,0.2);
\draw (0.3,0.05) node {${a}_-$};
\node[anch] (rho) at (0,1) {};
\draw (u)--(x)--(v) (x)--(alpha)..controls +(1,0.5) and +(0.5,0).. (rho);
\draw[dotted, ->-=0.75] (rho) ..controls +(-0.5,0) and +(-1,0.5).. (alpha);
\end{tikzpicture}
-
\begin{tikzpicture}[thick,scale=1,baseline=-0.6ex]
\node (blob) [fill=black!20!white, rectangle, rounded corners,minimum height=0.5cm,minimum width=2cm, inner sep=0pt] at (0.1,0) {};
\node[anch] (rho) at (-0.2,-1) {};
\node (u) at (-1,-1) {$\strut u_+$};
\node (v) at (1,-1) {$\strut v_-$};
\node[anch] (rhoin) at (-0.5,0) {};
\coordinate [label=right:{$\strut{a}_-$}] (alpha)  at (0.5,0);
\draw (u) to(rho) (v)--(alpha) (alpha) ..controls +(1,1) and +(-1,1)..(rhoin);
\draw[dotted,->,>=open triangle 60] (rho) to[out=0, in=-90] (blob);
\draw[dotted, ->-=0.6] (rhoin)--(alpha);
\end{tikzpicture}
$$
we get
$$\delta_{a}\on{GRic}_{V_+}^{(\nabla)}(u_+,v_-)=\la[s_-,u_+],v_-\ra$$
as
$$
-
\begin{tikzpicture}[thick,scale=1,baseline=-0.6ex]
\node[anch] (rho) at (-0.2,0) {};
\node (u) at (-1,-1) {$\strut u_+$};
\node (v) at (1,-1) {$\strut v_-$};
\coordinate (x) at (-0.7,0) {};
\coordinate [label=right:{$\strut{a}_-$}] (alpha)  at (0.5,0);
\draw (u) to(x) (v)--(alpha) (alpha) ..controls +(1,1) and +(-1,1)..(x)--(rho);
\draw[dotted,->-=0.6] (rho) -- (alpha);
\end{tikzpicture}
+
\begin{tikzpicture}[scale=0.6, baseline=-0.6ex, thick]
\node (u) at (-0.5,0) {$\strut u_+$};
\coordinate(x) at (1,0);
\coordinate (y) at (3,0) {};
\node (alpha) at (3.4,-0.4) {${a}_-$};
\node (v) at (4.5,0) {$\strut v_-$};
\node (rho) at (2,0.8) [anch] {};
\draw (u)--(x) (y)--(v);
\draw (x) .. controls +(0.4,-1) and +(-0.4,-1).. (y)
      (x) to[bend left] node[proj]{\tiny$+$} (rho);
\draw [dotted,->-=0.6](rho)to[bend left] (y);
\end{tikzpicture}
+
\begin{tikzpicture}[thick,scale=1,baseline=-0.6ex]
\node[anch] (rho) at (0.5,1) {};
\node (u) at (-1,-1) {$ u_+$};
\node (v) at (1,-1) {$ v_-$};
\coordinate (x) at (-0.5,0) {};
\coordinate  (alpha)  at (0.5,0);
\draw (0.3,-0.3) node {${a}_-$};
\draw (u)--(x)--(alpha)--(v);
\draw (x) ..controls +(-1,1) and +(-0.5,0.3)..node[proj] {\tiny$-$} (rho);
\draw[dotted, ->-=0.6] (rho)to[bend left=50](alpha);
\end{tikzpicture}
=0
$$
and the remaining terms cancel in pairs.

Finally, since $\la[s_-,u_+],v_-\ra$ is independent of $\nabla$, we get
$$\on{GRic}_{V_+}^{(\nabla+{a})}(u_+,v_-)-\on{GRic}_{V_+}^{(\nabla)}(u_+,v_-)=\la[s_-,u_+],v_-\ra$$
as we wanted to show.

\end{document}